\numberwithin{equation}{section}
\renewcommand{\@secnumfont}{\bfseries}
\renewcommand{\section}{\@startsection{section}{1}%
  {0mm}{.7\linespacing\@plus\linespacing}{.5\linespacing}
  {\normalfont\bfseries\centering}}
\newcommand{\bibsection}{\@startsection{section}{1}%
  {0mm}{.7\linespacing\@plus\linespacing}{.5\linespacing}
  {\normalfont\scshape\centering}}
\renewcommand{\@biblabel}[1]{#1.}
\newtheorem{thm}{\bf Theorem}[section]
\newtheorem{cor}[thm]{\bf Corollary}
\begin{document}

\vspace{1.3cm}

\title{Some identities for degenerate Bernoulli numbers of the second kind}

\author{Taekyun Kim}
\address{Department of Mathematics, Kwangwoon University, Seoul 139-701, Republic of Korea}
\email{tkkim@kw.ac.kr}

\author{Dae San Kim}
\address{Department of Mathematics, Sogang University, Seoul 121-742, Republic of Korea}
\email{dskim@sogang.ac.kr}

\subjclass[2010]{05A19, 11B73, 11B83, 34A34}
\keywords{degenerate Bernoulli numbers of the second kind, differential equations, Fa\`a di Bruno formula}
\begin{abstract}
We introduce the degenerate Bernoulli numbers of the  second kind
as a degenerate version of the Bernoulli numbers of the second
kind. We derive a family of nonlinear differential equations
satisfied by a function closely related to the generating function
for those numbers. We obtain explicit expressions for the
coefficients appearing in those differential equations and the
degenerate Bernoulli numbers of the second kind. In addition, as
an application and from those differential equations we have an
identity expressing the degenerate Bernoulli numbers of the second
kind in terms of those numbers of higher-orders.
\end{abstract}
\maketitle
\bigskip
\medskip

\markboth{\centerline{\scriptsize Some identities for degenerate Bernoulli numbers of the second kind }}
{\centerline{\scriptsize T. Kim, D. S. Kim}}

\section{Introduction and preliminaries}

As is well-known, the Bernoulli polynomials $B_n(x)$ are defined by
\begin{equation}\begin{split}\label{101}
\frac{t}{e^t-1}e^{xt} = \sum_{n=0}^\infty    B_n(x)   \frac{t^n}{n!}.
\end{split}\end{equation}

A degenerate version of the Bernoulli polynomials $B_n(x)$, denoted by $\beta_n(x)$ and called the degenerate Bernoulli polynomials, was introduced in [1,2] by Carlitz. They are given by
\begin{equation}\begin{split}\label{102}
\frac{t}{(1+\lambda t)^{\frac{1}{\lambda }}-1}(1+\lambda t)^{\frac{x}{\lambda }} = \sum_{n=0}^\infty      \beta_n(x) \frac{t^n}{n!}.
\end{split}\end{equation}

When $x=0$, $\beta_n=\beta_n(0)$ are called the degenerate Bernoulli numbers.

In \cite{09}, the degenerate exponential function $e_\lambda (t)$, $(\lambda \in (0,\infty), t \in \mathbb{R})$ was introduced in order to study the degenerate gamma function $\Gamma_\lambda (s)$ $(0< Re(s)<\frac{1}{\lambda})$, and the degenerate Laplace transforms $\mathcal{L}_\lambda  (f(t))$ for functions $f(t)$ defined for $t \geq 0$. The degenerate exponential function $e_\lambda (t) $ is defined by
\begin{equation}\begin{split}\label{103}
e_\lambda (t) = (1+\lambda t)^{\frac{1}{\lambda }}.
\end{split}\end{equation}

Thus, in terms of $e_\lambda (t)$, the degenerate Bernoulli numbers $\beta_n$ are given by
\begin{equation}\begin{split}\label{104}
\frac{t}{e_\lambda (t)-1} = \sum_{n=0}^\infty  \beta_n \frac{t^n}{n!}.
\end{split}\end{equation}

The compositional inverse of $e_\lambda (t)$, denoted by $\log_\lambda t$ and called the degenerate logarithmic function, is given by
\begin{equation}\begin{split}\label{105}
\log_\lambda  t = \frac{t^\lambda -1}{\lambda }.
\end{split}\end{equation}

The Bernoulli polynomials of the second kind $b_n(x)$ are defined by
\begin{equation}\begin{split}\label{106}
\frac{t}{\log(1+t)} (1+t)^x = \sum_{n=0}^\infty     b_n(x)   \frac{t^n}{n!}.
\end{split}\end{equation}

For $x=0$, $b_n=b_n(0)$ are called the Bernoulli numbers of the second kind.

In view of these considerations, the degenerate Bernoulli numbers of the second kind $b_{n,\lambda }$ are naturally introduced as
\begin{equation}\begin{split}\label{107}
\frac{t}{\log_\lambda (1+t)} = \frac{\lambda t}{(1+t)^\lambda -1} = \sum_{n=0}^\infty        b_{n,\lambda }\frac{t^n}{n!},\,\,(\lambda  \neq 0).
\end{split}\end{equation}

In this paper, we consider the function
\begin{equation}\begin{split}\label{108}
F=F(t)=F(t;\lambda ) = \frac{1}{\log_\lambda (1+t)} = \frac{\lambda }{(1+t)^\lambda -1},
\end{split}\end{equation}

\noindent which is closely related to the generating function for
$b_{n,\lambda }$. We will derive a family of nonlinear differential
equations satisfied by $F(t)$, and apply these to find some identity
expressing $b_{n,\lambda }$ in terms of higher-order degenerate
Bernoulli numbers of the second kind $b_{n,\lambda }^{(r)}$ (see
\eqref{50}). This line of study has been very active in recent
years, some of which are [4-7,11-13, 17].

Those family of differential equations involve certain coefficients $a_{i,\lambda }(N)$, which are given by and uniquely determined  by recurrence relations. One particular thing to note here is that those coefficients are explicitly determined in terms of the degenerate Stirling numbers of the second kind $S_{2,\lambda }(n,k)$, and also of the falling factorial polynomials.

The explicit formulas were obtained by using Fa\`a di Bruno formula (\cite{03}, p.137) which is expressed in terms of the exponential partial Bell polynomials $B_{n,k}(x_1,x_2,\cdots,x_{n-k+1})$ (see \eqref{18}). This idea of adopting Fa\`a di Bruno formula has been fruitfully exploited by F. Qi and their colleagues. For this, we let the reader refer to \cite{14,15} and the references therein.

The degenerate Stirling numbers of the first kind $S_{1,\lambda }(n,k)$ and those of the second kind $S_{2,\lambda }(n,k)$ had been recently introduced respectively in \cite{13} and \cite{08,13}. Further, the related degenerate complete Bell and degenerate exponential partial Bell polynomials were also introduced in \cite{07}.

For each nonnegative integer $k$, the degenerate Stirling numbers of the second kind $S_{2,\lambda }(n,k)$ are given by
\begin{equation}\begin{split}\label{109}
\frac{1}{k!} ((1+\lambda t)^{\frac{1}{\lambda }}-1)^k = \sum_{n=k}^\infty S_{2,\lambda }(n,k) \frac{t^n}{n!}.
\end{split}\end{equation}

Also, the generalized falling factorial polynomials $(x)_{n,\lambda }$ are defined by
\begin{equation}\begin{split}\label{110}
(x)_{n,\lambda }=x(x-\lambda )\cdots(x-(n-1)\lambda ),\,\,(n \geq 1),\,\,(x)_{0,\lambda }=1.
\end{split}\end{equation}

In particular, the falling factorial polynomials $(x)_n$ are given by $(x)_n=(x)_{n,1}$, $(n \geq 0)$.

Here we state in the following two theorems which are snapshots of this paper.

\begin{thm}
The family of nonlinear differential equations
\begin{equation}\begin{split}\label{111}
(-1)^N (1+t)^N F^{(N)} = \sum_{i=0}^N a_{i,\lambda }(N) F^{i+1},\,\,(N=1,2,3,\cdots)
\end{split}\end{equation}
have a solution
\begin{equation*}\begin{split}
F=F(t;\lambda )=\frac{1}{\log_\lambda (1+t)}=\frac{\lambda }{(1+t)^\lambda -1},
\end{split}\end{equation*}
where, for $0 \leq i \leq N$.
\begin{equation}\begin{split}\label{112}
a_{i,\lambda }(N) &= (-1)^N \lambda ^{-i} \sum_{k=i}^N \sum_{l=0}^k (-1)^l {k \choose i} {k \choose l} (\lambda l)_N\\
&= (-1)^N \lambda ^{N-i} \sum_{k=i}^N (-1)^k k! {k \choose i} S_{2,\frac{1}{\lambda }}(N,k).
\end{split}\end{equation}
\end{thm}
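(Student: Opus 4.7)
The plan is to write $F$ as a composition and apply the Faà di Bruno formula, reducing everything to a closed-form expression in the generalized falling factorials. Taking $g(s) = \lambda/s$ and $h(t) = (1+t)^\lambda - 1$, one has $F = g \circ h$, $g^{(k)}(s) = (-1)^k \lambda\, k!\, s^{-(k+1)}$, and $h^{(j)}(t) = (\lambda)_j (1+t)^{\lambda - j}$, where $(\lambda)_j = \lambda(\lambda-1)\cdots(\lambda-j+1)$ denotes the ordinary falling factorial. Faà di Bruno then gives
\begin{equation*}
F^{(N)} = \sum_{k=1}^{N} (-1)^k \lambda\, k!\, h^{-(k+1)}\, B_{N,k}\!\bigl(h', h'', \ldots, h^{(N-k+1)}\bigr),
\end{equation*}
so the task becomes simplifying each Bell polynomial and collecting powers of $F = \lambda/h$.

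To factor the $t$-dependence out of the Bell polynomials, I would write $h^{(m)} = (1+t)^{\lambda} \cdot (1+t)^{-m} (\lambda)_m$ and apply the standard homogeneities $B_{N,k}(cx_1, cx_2, \ldots) = c^k B_{N,k}(x_1, x_2, \ldots)$ and $B_{N,k}(ax_1, a^2 x_2, a^3 x_3, \ldots) = a^N B_{N,k}(x_1, x_2, \ldots)$, yielding
\begin{equation*}
B_{N,k}\!\bigl(h', h'', \ldots, h^{(N-k+1)}\bigr) = (1+t)^{\lambda k - N}\, B_{N,k}\!\bigl((\lambda)_1, (\lambda)_2, \ldots, (\lambda)_{N-k+1}\bigr).
\end{equation*}
The residual constant is then identified by comparing two generating series for $\frac{1}{k!}((1+t)^\lambda - 1)^k$: the defining series of $B_{N,k}$ with arguments $(\lambda)_j$ (using $\sum_{m\ge 1}(\lambda)_m t^m/m! = (1+t)^\lambda - 1$), and the series obtained from \eqref{109} after replacing $\lambda$ by $1/\lambda$ and $t$ by $\lambda t$. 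Equating coefficients of $t^N/N!$ delivers $B_{N,k}((\lambda)_1, (\lambda)_2, \ldots) = \lambda^N S_{2,1/\lambda}(N,k)$.

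The remaining step is a reindexing that converts the result into a polynomial in $F$. Since $h = \lambda/F$, the binomial expansion $(1+t)^{\lambda k} = (1+h)^k$ gives
\begin{equation*}
\frac{(1+t)^{\lambda k}}{h^{k+1}} = \sum_{l=0}^{k} \binom{k}{l}\!\left(\frac{F}{\lambda}\right)^{\!k+1-l} = \sum_{i=0}^{k} \binom{k}{i}\!\left(\frac{F}{\lambda}\right)^{\!i+1}
\end{equation*}
after setting $i = k - l$. Plugging this back into the Faà di Bruno expansion, multiplying by $(-1)^N(1+t)^N$, and collecting the coefficient of $F^{i+1}$ yields the second expression in \eqref{112}. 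The first expression follows on substituting the inversion identity
\begin{equation*}
k!\, S_{2,1/\lambda}(N,k) = \lambda^{-N} \sum_{l=0}^{k} (-1)^{k-l} \binom{k}{l} (\lambda l)_N,
\end{equation*}
which itself comes from expanding $(e_{1/\lambda}(t) - 1)^k$ by the binomial theorem together with $e_{1/\lambda}(t)^l = \sum_N (l)_{N,1/\lambda}\, t^N/N!$ and $(l)_{N,1/\lambda} = \lambda^{-N}(\lambda l)_N$; the sign $(-1)^{k-l}$ splits as $(-1)^k(-1)^l$, producing the first formula in \eqref{112}.

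The main obstacle, in my estimation, is executing the two Bell-polynomial homogeneities in the correct order (otherwise the exponent $\lambda k - N$ comes out wrong) and keeping the $\lambda \leftrightarrow 1/\lambda$ rescaling consistent when matching against \eqref{109}; all remaining manipulations are routine binomial reindexing.
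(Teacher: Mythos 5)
Your proposal is correct and follows essentially the same route as the paper: Faà di Bruno applied to $F$ written as a composition of $1/u$ (up to the placement of the factor $\lambda$) with $(1+t)^\lambda-1$, the homogeneity of the partial Bell polynomials to extract $(1+t)^{\lambda k-N}$, identification of $B_{N,k}$ at the falling factorials with $\lambda^N S_{2,1/\lambda}(N,k)$ via generating functions, and the binomial expansion of $(1+t)^{\lambda k}/h^{k+1}$ in powers of $F$ to read off the coefficients. All the individual identities you invoke check out, including the inversion formula relating $S_{2,1/\lambda}(N,k)$ to $(\lambda l)_N$.
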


\begin{thm} The degenerate Bernoulli numbers of the second kind $b_{n,\lambda }$ are given by
\begin{equation}\begin{split}\label{113}
b_{n,\lambda } &= (-1)^n \bigg\{ \frac{(1)_{n+1,\lambda }}{n+1} + \sum_{i=0}^{n-1} \frac{(1)_{i+1,\lambda }}{(i+1)!} (a_{i,\lambda }(n)-na_{i,\lambda }(n-1))\bigg\}\\
&=\frac{(-1)^n (1)_{n+1,\lambda }}{n+1} + \sum_{i=0}^{n-1} \sum_{k=i}^n \frac{(1)_{i+1,\lambda }}{(i+1)!} (-1)^k k! {k \choose i} \lambda ^{n-i-1} \\
&\quad \times (\lambda S_{2,\frac{1}{\lambda }}(n,k) + nS_{2,\frac{1}{\lambda }}(n-1,k))\\
&=\frac{(-1)^n (1)_{n+1,\lambda }}{n+1} + \sum_{i=0}^{n-1} \frac{(1)_{i+1,\lambda }}{(i+1)!}\lambda ^{-i}\\
&\quad \times \bigg\{ \sum_{l=0}^n (-1)^l {n \choose i} {n \choose l} (\lambda l)_n + \sum_{k=i}^{n-1} \sum_{l=0}^k (-1)^l {k \choose i} {k \choose l} (\lambda l+1)_n \bigg\},
\end{split}\end{equation}
where we understand that $S_{2,\frac{1}{\lambda }}(n-1,n)=0$.
\end{thm}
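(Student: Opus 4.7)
The plan is to combine the ODEs of Theorem 1.1 at orders $n-1$ and $n$ and then extract a specific Laurent coefficient that forces the constants $(1)_{i+1,\lambda}/(i+1)!$ to appear. I will set $\phi(t):=tF(t)=\sum_{n\ge 0}b_{n,\lambda}\,t^n/n!$, so that $b_{n,\lambda}=\phi^{(n)}(0)$. The first ingredient is an explicit Laurent expansion: under the change of variable $s=(1+t)^\lambda-1=\lambda/F(t)$, one has $1+t=(1+s)^{1/\lambda}$, and the binomial theorem gives
\[
\phi(t)=\frac{\lambda t}{s}=\sum_{m\ge 0}\frac{(1)_{m+1,\lambda}}{(m+1)!\,\lambda^m}s^m=\sum_{m\ge 0}\frac{(1)_{m+1,\lambda}}{(m+1)!}F^{-m}(t).
\]
Multiplying by $F^i$ yields the key formula $tF^{i+1}(t)=\sum_{m\ge 0}\frac{(1)_{m+1,\lambda}}{(m+1)!}F^{i-m}(t)$, whose coefficient of $F^0$ equals $(1)_{i+1,\lambda}/(i+1)!$.

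Next I will derive two functional identities for $(1+t)^n F^{(n-1)}(t)$. Applying the product rule, multiplying by $(-1)^n$, and substituting the ODEs at $N=n-1$ and $N=n$ produces, with the convention $a_{n,\lambda}(n-1)=0$,
\[
(-1)^n\frac{d}{dt}\bigl[(1+t)^n F^{(n-1)}(t)\bigr]=\sum_{i=0}^n\bigl[a_{i,\lambda}(n)-n\,a_{i,\lambda}(n-1)\bigr]F^{i+1}(t).\qquad(\star)
\]
Independently, Leibniz applied to $\phi=tF$ gives $tF^{(n)}=\phi^{(n)}-nF^{(n-1)}$; plugging this into $t\cdot\frac{d}{dt}[(1+t)^n F^{(n-1)}]$ after expanding the derivative produces the algebraic identity
\[
t\frac{d}{dt}\bigl[(1+t)^n F^{(n-1)}(t)\bigr]=(1+t)^n\phi^{(n)}(t)-n(1+t)^{n-1}F^{(n-1)}(t).\qquad(\dagger)
\]

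Finally I will multiply $(\star)$ by $t$, substitute $(\dagger)$ on the left, and extract the coefficient of $F^0$ (equivalently, the constant term in the $s$-expansion). On the right, the key formula gives $\sum_{i=0}^n[a_{i,\lambda}(n)-na_{i,\lambda}(n-1)]\frac{(1)_{i+1,\lambda}}{(i+1)!}$. On the left, $(1+t)^n\phi^{(n)}$ is analytic at $s=0$, so its constant term in $s$ equals $\phi^{(n)}(0)=b_{n,\lambda}$; while by the ODE at $N=n-1$ the quantity $(1+t)^{n-1}F^{(n-1)}=(-1)^{n-1}\sum_i a_{i,\lambda}(n-1)F^{i+1}$ is a polynomial in $F$ of positive valuation and contributes $0$. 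This yields
\[
(-1)^n b_{n,\lambda}=\sum_{i=0}^n\bigl[a_{i,\lambda}(n)-n\,a_{i,\lambda}(n-1)\bigr]\frac{(1)_{i+1,\lambda}}{(i+1)!}.
\]
Splitting off the $i=n$ term, in which $a_{n,\lambda}(n)=n!$ (obtained either by matching the $1/t^{n+1}$ leading singularity in the ODE at $N=n$, or directly from Theorem 1.1), produces the first summand $(1)_{n+1,\lambda}/(n+1)$ of \eqref{113}; the two remaining forms then follow by inserting the two explicit expressions for $a_{i,\lambda}$ provided by Theorem 1.1. The hard part will be recognizing that one must work with the Laurent expansion in $F$ (via the substitution $s=(1+t)^\lambda-1$) rather than the Taylor expansion in $t$; the identity $(\dagger)$ is what makes the $F^0$-extraction clean, because it replaces the singular quantity $F^{(n-1)}$ by the analytic $\phi^{(n)}$ plus a correction which is a polynomial in $F$ of positive valuation.
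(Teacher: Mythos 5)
Your argument is correct, and it shares the paper's two essential ingredients: the ODEs of Theorem 1.1 at the consecutive orders $n-1$ and $n$ applied to $(tF)^{(n)}=tF^{(n)}+nF^{(n-1)}$, and the expansion of $t$ in powers of $w=\log_\lambda(1+t)=1/F$ (your $s$ equals $\lambda w$), which is the source of the coefficients $(1)_{j,\lambda}/j!$. Where you genuinely differ is in how the one relevant coefficient is isolated. The paper substitutes the ODEs into $(tF)^{(n)}$ to obtain a quotient $(-1)^n S_{n,w,\lambda}/w^{n+1}$ as in \eqref{32}--\eqref{34}, expands the numerator completely as a power series in $w$, and evaluates $b_{n,\lambda}=\lim_{t\to 0}(tF)^{(n)}$ as the coefficient of $w^{n+1}$; this forces it to observe that the coefficients of $w^j$ for $1\le j\le n$ vanish, which is exactly the by-product identity \eqref{35}. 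Your identity $(\dagger)$ replaces that limit computation by an exact Laurent identity in $s$: the only singular contribution, $n(1+t)^{n-1}F^{(n-1)}$, is by the order-$(n-1)$ ODE a polynomial in positive powers of $F$ and hence has no $s^0$ term, so reading off the constant coefficient immediately gives
\begin{equation*}
(-1)^n b_{n,\lambda}=\sum_{i=0}^n\bigl(a_{i,\lambda}(n)-na_{i,\lambda}(n-1)\bigr)\frac{(1)_{i+1,\lambda}}{(i+1)!},
\end{equation*}
with no need to track lower-order cancellations. The trade-off is clear: your route is shorter and makes the appearance of $(1)_{i+1,\lambda}/(i+1)!$ conceptually transparent, but it does not produce the identities of \eqref{35} that the paper obtains for free from its fuller expansion. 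Splitting off $i=n$ with $a_{n,\lambda}(n)=n!$ and then inserting the two explicit forms of $a_{i,\lambda}(N)$ from \eqref{23} completes the passage to all three displayed expressions in \eqref{113}, exactly as in the paper.
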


In summary, we introduce the degenerate Bernoulli numbers of the second kind $b_{n,\lambda }$ as a degenerate version of Bernoulli numbers of the second kind. We derive a family of nonlinear differential equations in \eqref{111} which are satisfied by the function $F(t)$ closely related to the generating function $tF(t)$ of the numbers $b_{n,\lambda }$. We obtain the explicit expressions in \eqref{112} for the coefficients $a_{i,\lambda }(N)$ appearing in those differential equations by using Fa\`a di Bruno formula (see \eqref{21}). Then from the differential equations in \eqref{111} and the explicit expressions of $a_{i,\lambda }(N)$ in \eqref{112} we will determine the explicit expressions of $b_{n,\lambda }$ in \eqref{113}. Finally, as an application we will derive an identity which expresses $b_{n,\lambda }$ in terms of higher-order degenerate Bernoulli numbers of the second.

\section{Differential equations satisfied by degenerate Bernoulli numbers of the second kind}

We let
\begin{equation}\begin{split}\label{01}
F=F(t) = F(t;\lambda ) = \frac{1}{\log_\lambda (1+t)},\,\,(\lambda  \neq 0).
\end{split}\end{equation}

Differentiation with respect to $t$ of \eqref{01} gives us
\begin{equation}\begin{split}\label{02}
F^{(1)} &= \frac{-1}{(\log_\lambda (1+t))^2}(1+t)^{\lambda -1}\\
&= \frac{-1}{(\log_\lambda (1+t))^2} \left( \frac{\lambda }{1+t}\log_\lambda (1+t) + \frac{1}{1+t} \right)\\
&= \frac{-1}{1+t} \left( \frac{\lambda }{\log_\lambda (1+t)} + \left( \frac{1}{\log_\lambda (1+t)} \right)^2 \right)\\
&= \frac{-1}{1+t} (\lambda F+F^2 ).
\end{split}\end{equation}

Further differentiation of \eqref{02} yields
\begin{equation}\begin{split}\label{03}
F^{(2)} &= \left( \frac{-1}{1+t} \right)^2 (\lambda F+F^2 ) + \frac{-1}{1+t}(\lambda F^{(1)} + 2FF^{(1)} ) \\
&= \left( \frac{-1}{1+t} \right)^2 (\lambda F+F^2)(1+\lambda +2F) \\
&=\left( \frac{-1}{1+t} \right)^2 \left\{ (\lambda +\lambda ^2)F + (1+3\lambda )F^2 + 2F^3 \right\}.
\end{split}\end{equation}

Yet further differentiation gives us
\begin{equation}\begin{split}\label{04}
F^{(3)} &= \frac{-2}{(1+t)^3}\left\{ (\lambda +\lambda ^2)F + (1+3\lambda )F^2 + 2F^3 \right\}\\
&\quad + \left( \frac{-1}{1+t} \right)^2 F^{(1)} \left\{ (\lambda +\lambda ^2)+ (2+6\lambda )F + 6F^2 \right\}\\
&= \left( \frac{-1}{1+t} \right)^3 \left\{ (2\lambda +2\lambda ^2)F + (2+6\lambda )F^2 + 4F^3 \right\}\\
&\quad + \left( \frac{-1}{1+t} \right)^3 (\lambda F+F^2) \left\{ (\lambda +\lambda ^2)+ (2+6\lambda )F + 6F^2 \right\}\\
&= \left( \frac{-1}{1+t} \right)^3 \left\{ (2\lambda +3\lambda ^2+\lambda ^3)F + (2+9\lambda +7\lambda ^2)F^2 + (6+12\lambda )F^3 + 6F^4 \right\}.
\end{split}\end{equation}

The above observations lead us to put
\begin{equation}\begin{split}\label{05}
F^{(N)} = \left( \frac{-1}{1+t} \right)^N \sum_{i=0}^N a_{i,\lambda }(N) F^{i+1},\,\,(N=1,2,3,\cdots).
\end{split}\end{equation}

Our next task is to determine $a_{i,\lambda }(N)$, $(0 \leq i \leq N)$.

By differentiating \eqref{05}, we obtain
\begin{equation*}\begin{split}\label{06}
F^{(N+1)} &= N \left( \frac{-1}{1+t} \right)^{N+1} \sum_{i=0}^N a_{i,\lambda }(N) F^{i+1}\\
&\quad +  \left( \frac{-1}{1+t} \right)^N F^{(1)} \sum_{i=0}^N (i+1) a_{i,\lambda }(N) F^i\\
&=  \left( \frac{-1}{1+t} \right)^{N+1} \sum_{i=0}^N N a_{i,\lambda }(N) F^{i+1}\\
&\quad +  \left( \frac{-1}{1+t} \right)^{N+1}  \sum_{i=0}^N (i+1) a_{i,\lambda }(N) (\lambda F+F^2) F^i\\
&=  \left( \frac{-1}{1+t} \right)^{N+1} \sum_{i=0}^N (N+\lambda +i\lambda ) a_{i,\lambda }(N) F^{i+1}\\
&\quad +  \left( \frac{-1}{1+t} \right)^{N+1}  \sum_{i=0}^N (i+1) a_{i,\lambda }(N) F^{i+2}
\end{split}\end{equation*}
\begin{equation}\begin{split}\label{06}
&=  \left( \frac{-1}{1+t} \right)^{N+1} \sum_{i=0}^N (N+\lambda +i\lambda ) a_{i,\lambda }(N) F^{i+1}\\
&\quad +  \left( \frac{-1}{1+t} \right)^{N+1}  \sum_{i=1}^{N+1} i a_{i-1,\lambda }(N) F^{i+1}
\\
&=  \left( \frac{-1}{1+t} \right)^{N+1} \bigg\{ (N+\lambda )a_{0,\lambda }(N)F \\
&\quad + \sum_{i=1}^N \big((N+\lambda +i\lambda )a_{i,\lambda }(N)+ia_{i-1,\lambda }(N) \big) F^{i+1} \\
&\quad + (N+1)a_{N,\lambda }(N)F^{i+2} \bigg\}.
\end{split}\end{equation}

On the other hand, by replacing $N$ by $N+1$ in \eqref{05}, we have
\begin{equation}\begin{split}\label{07}
F^{(N+1)} = \left( \frac{-1}{1+t} \right)^{N+1} \sum_{i=0}^{N+1} a_{i,\lambda }(N+1) F^{i+1}.
\end{split}\end{equation}

Comparing \eqref{06} and \eqref{07}, we get the following recurrence relations:
\begin{eqnarray}\begin{split}\label{08}
&a_{0,\lambda }(N+1)=(N+\lambda )a_{0,\lambda }(N),\\
&a_{N+1,\lambda }(N+1) = (N+1)a_{N,\lambda }(N),\\
&a_{i,\lambda }(N+1) = (N+(i+1)\lambda )a_{i,\lambda }(N)+ia_{i-1,\lambda }(N), \,\,\text{for}\,\, 1 \leq i \leq N.
\end{split}\end{eqnarray}

From \eqref{05} with $N=1$ and \eqref{02}, we obtain the initial conditions
\begin{equation}\begin{split}\label{11}
a_{0,\lambda }(1) = \lambda, \,a_{1,\lambda }(1)=1.
\end{split}\end{equation}

We now observe from \eqref{08} and \eqref{11} that
\begin{equation}\begin{split}\label{12}
a_{0,\lambda }(N+1)&= (N+\lambda )a_{0,\lambda }(N)\\
&=(N+\lambda )(N+\lambda -1)a_{0,\lambda }(N-1)\\
&=\cdots\\
&=(N+\lambda )(N+\lambda -1)\cdots(N+\lambda -(N-1))a_{0,\lambda }(1)\\
&= (N+\lambda )_{N+1},
\end{split}\end{equation}

where $(x)_N=x(x-1)\cdots(x-N+1)$, $(N \geq 1)$, and $(x)_0=1$.

Also, from \eqref{08} and \eqref{11} we have
\begin{equation}\begin{split}\label{13}
a_{N+1,\lambda }(N+1)&= (N+1)a_{N,\lambda }(N)\\
&= (N+1)Na_{N-1,\lambda }(N-1)\\
&=\cdots\\
&=(N+1)N\cdots 2 a_{1,\lambda }(1)\\
&= (N+1)!.
\end{split}\end{equation}

Let $i$ be a fixed integer with $1 \leq i \leq N$. Then, from \eqref{08} and \eqref{13}, we observe that
\begin{equation*}\begin{split}
a_{i,\lambda }(N+1)&=   (N+(i+1)\lambda )a_{i,\lambda }(N) + ia_{i-1,\lambda }(N)\\
&= (N+(i+1)\lambda ) \bigg\{ (N+ (i+1)\lambda -1) a_{i,\lambda }(N-1) + i a_{i-1,\lambda }(N-1) \bigg\}\\
&\quad +ia_{i-1,\lambda }(N)\\
&= (N+(i+1)\lambda )(N+(i+1)\lambda -1) a_{i,\lambda }(N-1)\\
&\quad + i (N+(i+1)\lambda )a_{i-1,\lambda }(N-1)+ia_{i-1,\lambda }(N)\\
&=(N+(i+1)\lambda )(N+(i+1)\lambda -1) \\
&\quad \times \bigg\{ (N+(i+1)\lambda -2)a_{i,\lambda }(N-2)+ia_{i-1,\lambda }(N-2)\bigg\}\\
&\quad + i(N+(i+1)\lambda )a_{i-1,\lambda }(N-1)+ia_{i-1,\lambda }(N)\\
&= (N+(i+1)\lambda )(N+(i+1)\lambda -1)(N+(i+1)\lambda -2) a_{i,\lambda }(N-2)\\
&\quad + i \sum_{l=0}^2 (N+(i+1)\lambda )_l a_{i-1,\lambda }(N-l) \\
&= \cdots\\
&= i! (N+(i+1)\lambda )_{N-i+1}\\
&\quad + i \sum_{l=0}^{N-i} (N+(i+1)\lambda )_l a_{i-1,\lambda }(N-l).
\end{split}\end{equation*}

Thus we have shown that
\begin{equation}\begin{split}\label{14}
a_{i,\lambda }(N+1) &=  i! (N+(i+1)\lambda )_{N-i+1}\\
&\quad + i \sum_{l=0}^{N-i} (N+(i+1)\lambda )_l a_{i-1,\lambda }(N-l),
\end{split}\end{equation}
for $1 \leq i \leq N$.

Now, from \eqref{05} and \eqref{12}-\eqref{14}, we get the following theorem.

\begin{thm}
The family of nonlinear differential equations
\begin{equation*}\begin{split}
(-1)^N (1+t)^N F^{(N)} = \sum_{i=0}^N a_{i,\lambda }(N) F^{i+1},\,\,(N=1,2,3,\cdots)
\end{split}\end{equation*}
have a solution
\begin{equation*}\begin{split}
F=F(t;\lambda )=\frac{1}{\log_\lambda (1+t)}=\frac{\lambda }{(1+t)^\lambda -1},
\end{split}\end{equation*}
where $a_{i,\lambda }(N) (0 \leq i \leq N)$ are uniquely determined by
\begin{equation}\begin{split}\label{15}
a_{0,\lambda }(N)=(N+\lambda -1)_N,
\end{split}\end{equation}

\begin{equation}\begin{split}\label{16}
a_{i,\lambda }(N)&= i! (N+(i+1)\lambda -1)_{N-i}\\
&\quad + i \sum_{l=0}^{N-i-1} (N+(i+1)\lambda -1)_l a_{i-1,\lambda }(N-l-1),\,\,(1 \leq i \leq N-1),
\end{split}\end{equation}
\begin{equation}\begin{split}\label{17}
a_{N,\lambda }(N)=N!.
\end{split}\end{equation}
\end{thm}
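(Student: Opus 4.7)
The plan is essentially to formalize the derivation already carried out in the preceding discussion of Section 2. First, I would verify the ansatz
$$F^{(N)} = \Big(\tfrac{-1}{1+t}\Big)^N \sum_{i=0}^N a_{i,\lambda}(N)\, F^{i+1}$$
by induction on $N$. The base case $N=1$ follows directly from the logarithmic differentiation in \eqref{02}, which also fixes the initial data $a_{0,\lambda}(1)=\lambda$ and $a_{1,\lambda}(1)=1$. For the induction step, I differentiate the ansatz, then substitute the first-order identity $F^{(1)}=-(1+t)^{-1}(\lambda F+F^2)$ to eliminate $F^{(1)}$; after re-indexing the sum with $i+2 \mapsto i+1$, I obtain an expression of the same shape with $N$ replaced by $N+1$. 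Matching coefficients of $F,\ F^{i+1}\ (1\le i\le N),$ and $F^{N+2}$ yields the three-term recurrence \eqref{08}.

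Next, I solve the recurrence in three regimes. For $i=0$, iterating $a_{0,\lambda}(N+1)=(N+\lambda)a_{0,\lambda}(N)$ all the way down to $a_{0,\lambda}(1)=\lambda$ telescopes into the falling factorial $(N+\lambda)_{N+1}$, giving \eqref{15}. For $i=N$, the relation $a_{N+1,\lambda}(N+1)=(N+1)a_{N,\lambda}(N)$ iterated down to $a_{1,\lambda}(1)=1$ gives $(N+1)!$, which is \eqref{17}. For $1\le i\le N-1$, I hold $i$ fixed and repeatedly apply the middle case of \eqref{08} to the $a_{i,\lambda}$-term only; the inhomogeneous term $i\,a_{i-1,\lambda}(N-l)$ picks up a product factor $(N+(i+1)\lambda)(N+(i+1)\lambda-1)\cdots(N+(i+1)\lambda-(l-1)) = (N+(i+1)\lambda)_l$ at each stage. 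After $N-i$ iterations the $a_{i,\lambda}$-term reaches $a_{i,\lambda}(i)$, which from \eqref{13} equals $i!$, and the accumulated sum of inhomogeneities produces the closed form \eqref{16} (after the index shift $N \mapsto N-1$ needed to state the theorem in terms of $a_{i,\lambda}(N)$).

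Uniqueness of the $a_{i,\lambda}(N)$ is immediate: the recurrences \eqref{08} together with the initial conditions \eqref{11} determine every value, and equivalently the formulas \eqref{15}--\eqref{17} do so as well by induction on $N$. The fact that $F=1/\log_\lambda(1+t)$ solves the system is then a consequence of the induction in the first step, since at each stage the ansatz is verified for this particular $F$.

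The main technical obstacle will be the bookkeeping in the third regime: I need to track carefully which falling-factorial factors accumulate on the $a_{i-1,\lambda}$ terms as the recurrence is unwound, and to ensure the index $l$ in $(N+(i+1)\lambda)_l$ matches $a_{i-1,\lambda}(N-l)$ correctly at each step. The telescoping-plus-accumulation pattern is the only nontrivial piece; everything else is a mechanical induction driven by the separable first-order ODE for $F$.
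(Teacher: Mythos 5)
Your proposal is correct and follows essentially the same route as the paper: set up the ansatz, differentiate and substitute $F^{(1)}=-(1+t)^{-1}(\lambda F+F^2)$ to obtain the three-term recurrence \eqref{08} with initial data \eqref{11}, then solve it by telescoping in the boundary cases and by unwinding with accumulated falling-factorial factors in the middle regime. No gaps; the bookkeeping you flag is exactly the computation the paper carries out to reach \eqref{14}.
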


\section{Explicit determination of $a_{i,\lambda }(N)$ and  $b_{n,\lambda }$}

In this section, we would like to determine explicit expressions for $a_{i,\lambda }(N)$ and $b_{n,\lambda }$. For this purpose, we will make use of the exponential partial Bell polynomials $B_{n,k}(x_1,x_2,\cdots,x_{n-k+1})$ and Fa\`a di Bruno formula (see \cite{03}, p.133) which have been intensively used by F. Qi et al (see \cite{15}).

Further, we will show that our result can be expressed also in terms of the degenerate Stirling numbers of the second kind $S_{2,\lambda }(n,k)$ (see \cite{08,13}).

The exponential partial Bell polynomials $B_{n,k}(x_1,x_2,\cdots,x_{n-k+1})$ are defined by
\begin{equation}\begin{split}\label{18}
&B_{n,k}(x_1,x_2,\cdots,x_{n-k+1})\\
&= \sum \frac{n!}{\prod_{l=1}^{n-k+1}i_l!} \prod_{l=1}^{n-k+1} \left( \frac{x_l}{l!} \right)^{i_l},\,\,(n \geq k \geq 0),
\end{split}\end{equation}
where the sum runs all nonnegative integers $i_1,i_2,\cdots,i_{n-k+1}$ satisfying
$ i_1+i_2+\cdots +i_{n-k+1}=k $, and
$ i_1+2i_2+\cdots+(n-k+1)i_{n-k+1}=n$, (see \cite{03}, p.133).

The generating function for $B_{n,k} (x_1,x_2,\cdots,x_{n-k+1})$ is given by
\begin{equation}\begin{split}\label{19}
\frac{1}{k!} \left( \sum_{i=1}^\infty x_i \frac{t^i}{i!} \right)^k = \sum_{n=k}^\infty B_{n,k} (x_1,x_2,\cdots,x_{n-k+1}) \frac{t^n}{n!},\quad (\text{see}\,\, [3], \text{p}.133).
\end{split}\end{equation}

Replacing $x_i$ by $(1)_{i,\lambda }$ $(i=1,2,\cdots)$ in \eqref{19}, we easily obtain the following expressions
\begin{equation}\begin{split}\label{20}
&B_{n,k}( (1)_{1,\lambda },(1)_{2,\lambda },\cdots,(1)_{n-k+1,\lambda })\\
&=\frac{(-1)^k}{k!} \sum_{l=0}^k (-1)^l {k \choose l} (l)_{n,\lambda } = S_{2,\lambda }(n,k),\,\,(n \geq k),\quad (\text{see}\,\, (1.9),(1.10)).
\end{split}\end{equation}

Here we note that the first equality in \eqref{20} was derived in \cite{15} and the whole equalities were also obtained in (1.7) and (2.18) of \cite{07}.

The next Fa\`a di Bruno formula has played important role in many of the papers by F. Qi and his colleagues (see, for example \cite{15}).

\begin{equation}\begin{split}\label{21}
\frac{d^n}{dt^n}f \circ g(t) \\
&= \sum_{k=0}^n f^{(k)}(g(t)) B_{n,k}(g'(t),g''(t),\cdots,g^{(n-k+1)}(t) ),\quad (\text{see}\,\, [3,\,\,\text{p}.137]).
\end{split}\end{equation}

We need one more formula from (\cite{03}, p.135).
\begin{equation}\begin{split}\label{22}
&B_{n,k}(abx_1,ab^2x_2,\cdots,ab^{n-k+1}x_{n-k+1}) \\
&=a^k b^n B_{n,k}(x_1,x_2,\cdots,x_{n-k+1}),\quad (\text{see}\,\, [3,\,\text{p}.135]).
\end{split}\end{equation}

We are now going to show the following theorem about explicit expressions for $a_{i,\lambda }{(N)}$.

\begin{thm}
For $0 \leq i \leq N$,
\begin{equation}\begin{split}\label{23}
a_{i,\lambda }(N) &= (-1)^N \lambda ^{-i} \sum_{k=i}^N \sum_{l=0}^k (-1)^l {k \choose i} {k \choose l} (\lambda l)_N\\
&= (-1)^N \lambda ^{N-i} \sum_{k=i}^N (-1)^k k! {k \choose i} S_{2,\frac{1}{\lambda }}(N,k).
\end{split}\end{equation}
\end{thm}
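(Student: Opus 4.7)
The plan is to compute $F^{(N)}$ directly via Fa\`a di Bruno \eqref{21} and then read off the $a_{i,\lambda}(N)$ by comparison with \eqref{05}, rather than verify the closed form inductively against the recurrences \eqref{08}. Write $F = f \circ g$ with $f(u) = \lambda/u$ and $g(t) = (1+t)^\lambda - 1$, so that $g(t) = \lambda/F(t)$.

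The outer factor is easy: $f^{(k)}(u) = (-1)^k k!\,\lambda u^{-k-1}$, which evaluated at $u = g(t)$ gives $f^{(k)}(g(t)) = (-1)^k k!\,\lambda^{-k} F^{k+1}$. For the inner factor, $g^{(m)}(t) = (\lambda)_m (1+t)^{\lambda-m}$ where $(\lambda)_m = \lambda(\lambda-1)\cdots(\lambda-m+1)$ in the notation of \eqref{12}. Setting $a = (1+t)^\lambda$ and $b = (1+t)^{-1}$, we have $g^{(m)}(t) = a b^m (\lambda)_m$, so the scaling identity \eqref{22} gives
\begin{equation*}
B_{N,k}\bigl(g'(t),\ldots,g^{(N-k+1)}(t)\bigr) = (1+t)^{k\lambda - N}\, B_{N,k}\bigl((\lambda)_1,\ldots,(\lambda)_{N-k+1}\bigr).
\end{equation*}

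The key step is to identify the constant Bell polynomial on the right. From \eqref{19} together with $\sum_{m\geq1} (\lambda)_m s^m/m! = (1+s)^\lambda - 1$, I obtain $\sum_{N\geq k} B_{N,k}((\lambda)_1,\ldots)\, s^N/N! = \frac{1}{k!}((1+s)^\lambda - 1)^k$. Comparing with the defining series \eqref{109} of $S_{2,1/\lambda}$ under the substitutions $\lambda \mapsto 1/\lambda$ and $t \mapsto \lambda s$ yields $B_{N,k}((\lambda)_1,\ldots,(\lambda)_{N-k+1}) = \lambda^N S_{2,1/\lambda}(N,k)$. Substituting this back and using $(1+t)^\lambda = 1 + \lambda/F$ to reorganize $(1+t)^{k\lambda}F^{k+1} = F(\lambda + F)^k = \sum_{i=0}^k \binom{k}{i}\lambda^{k-i}F^{i+1}$, I collect the coefficient of $F^{i+1}$ in $(-1)^N(1+t)^N F^{(N)}$ and arrive at the second equality of \eqref{23}.

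The first equality then follows by inserting the explicit expansion \eqref{20} with $\lambda$ replaced by $1/\lambda$, which converts $(l)_{N,1/\lambda}$ into $\lambda^{-N}(\lambda l)_N$ and contributes a sign $(-1)^k$ that cancels the $(-1)^k$ already present in the second formula. The main obstacle I anticipate is the bookkeeping: correctly identifying the Bell polynomial with $S_{2,1/\lambda}$ rather than $S_{2,\lambda}$, and interchanging summation so that the $k$-sum appears outermost over $k = i, \ldots, N$, are where a sign or exponent slip would most easily creep in; everything else reduces to routine manipulation.
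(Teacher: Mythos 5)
Your proposal is correct and follows essentially the same route as the paper: Fa\`a di Bruno applied to $F$ written as an outer power of $u$ composed with $(1+t)^\lambda-1$ (up to where the factor $\lambda$ is placed), the scaling identity \eqref{22} to pull out $(1+t)^{k\lambda-N}$, identification of the constant Bell polynomial with $\lambda^N S_{2,\frac{1}{\lambda}}(N,k)$, and the binomial expansion $(1+t)^{k\lambda}F^{k+1}=F(\lambda+F)^k$, which is exactly the paper's step \eqref{27} rewritten in terms of $F$ instead of $u=1/F$. All signs and exponents in your outline check out, including the conversion $(l)_{N,1/\lambda}=\lambda^{-N}(\lambda l)_N$ used for the first equality.
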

\begin{proof}
We apply the Fa\`a di Bruno formula in \eqref{21} with
\begin{equation*}\begin{split}
u=g(t) = \frac{(1+t)^\lambda -1}{\lambda },\,\,f(u) = \frac{1}{u}.
\end{split}\end{equation*}

Then, by using \eqref{22}, we have
\begin{equation}\begin{split}\label{24}
\frac{d^N}{dt^N} F(t) &= \frac{d^N}{dt^N} f \circ g (t)\\
&= \sum_{k=0}^N \left( \frac{1}{u}\right)^{(k)} B_{n,k}(g'(t),g''(t),\cdots,g^{(N-k+1)}(t)) \\
&= \sum_{k=0}^N (-1)^k k! \frac{1}{u^{k+1}} (1+t)^{\lambda k-N} \lambda ^{N-k}\\
&\quad \times B_{N,k} \big((1)_{1,\frac{1}{\lambda }},(1)_{2,\frac{1}{\lambda }},\cdots,(1)_{N-k+1,\frac{1}{\lambda }}\big).
\end{split}\end{equation}

With $\lambda $ replaced by $\frac{1}{\lambda }$ in \eqref{20}, from \eqref{24} we obtain two different expressions.

\begin{eqnarray}
&(-1)^N (1+t)^N \frac{d^N}{dt^N}F(t) \nonumber  \\ \label{25}
&= (-1)^N  \sum_{k=0}^N (-1)^k k! \frac{1}{u^{k+1}}(1+t)^{\lambda k}\lambda^{N-k}S_{2,\frac{1}{\lambda }}(N,k)\\ \label{26}
&=(-1)^N  \sum_{k=0}^N \frac{1}{u^{k+1}}(1+t)^{\lambda k}\lambda^{-k} \sum_{l=0}^k (-1)^l {k \choose l} (\lambda l)_N.
\end{eqnarray}

Before proceeding further, we observe the following.
\begin{equation}\begin{split}\label{27}
&\frac{1}{u^{k+1}}(1+t)^{\lambda k}\lambda ^{-k}\\
&= \frac{1}{u^{k+1}} \left( \frac{(1+t)^\lambda -1}{\lambda } + \frac{1}{\lambda } \right)^k\\
&= \frac{1}{u} \big( 1+ \frac{1}{\lambda } u^{-1} \big)^k\\
&= \frac{1}{u} \sum_{i=0}^k {k \choose i} \lambda ^{-i} u^{-i}\\
&= \sum_{i=0}^k {k \choose i} \lambda ^{-i} \frac{1}{u^{i+1}}.
\end{split}\end{equation}

Substituting \eqref{27} respectively into \eqref{25} and \eqref{26} we have

\begin{eqnarray}
&(-1)^N (1+t)^N \frac{d^N}{dt^N}F(t) \nonumber  \\ \label{28}
&= (-1)^N \sum_{i=0}^N \lambda ^{N-i} \sum_{k=i}^N (-1)^k k! {k \choose i} S_{2, \frac{1}{\lambda }}(N,k) F^{i+1}\\ \label{29}
&= (-1)^N \sum_{i=0}^N  \lambda ^{-i} \sum_{k=i}^N \sum_{l=0}^k (-1)^l {k \choose i}{k \choose l} (\lambda l)_N F^{i+1}.
\end{eqnarray}

Now, by comparing \eqref{28} and \eqref{29} with \eqref{05}, the derived results follow.
\end{proof}

\begin{cor}
For $N=1,2,\cdots,$ the sequence of numbers $a_{i,\lambda }(N)$ $(0 \leq i \leq N)$ uniquely determined by the recurrence relations \eqref{15}-\eqref{17} is explicitly given by

\begin{equation*}\begin{split}
a_{i,\lambda }(N) &= (-1)^N \lambda ^{-i} \sum_{k=i}^N \sum_{l=0}^k (-1)^l {k \choose i} {k \choose l} (\lambda l)_N\\
&= (-1)^N \lambda ^{N-i} \sum_{k=i}^N (-1)^k k! {k \choose i} S_{2,\frac{1}{\lambda }}(N,k).
\end{split}\end{equation*}
\end{cor}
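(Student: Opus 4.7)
The corollary is in essence a repackaging of Theorem 3.1 together with the uniqueness assertion of Theorem 2.1, so my plan is to avoid any new computation and simply argue that the two characterizations of $a_{i,\lambda}(N)$ must agree.

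First I would recall the logical flow of Section 2: equation \eqref{05} \emph{defines} $a_{i,\lambda}(N)$ as the coefficients that express $(-1)^N(1+t)^N F^{(N)}$ as a polynomial in $F$, and differentiating \eqref{05} and equating coefficients against the shifted version \eqref{07} produced the recurrence relations \eqref{08}, which together with the initial data from \eqref{11} yield the closed recurrence system \eqref{15}-\eqref{17}. Because the recurrence determines each $a_{i,\lambda}(N)$ from strictly smaller values (and from the anchor $a_{0,\lambda}(1)=\lambda$, $a_{1,\lambda}(1)=1$), the coefficients in \eqref{05} are \emph{uniquely} specified by \eqref{15}-\eqref{17}.

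Next I would invoke Theorem 3.1, whose proof computed exactly these same coefficients by a different route: applying Fa\`a di Bruno's formula \eqref{21} to $F=f\circ g$ with $g(t)=\log_\lambda(1+t)$ and $f(u)=1/u$, then using \eqref{22} and the identity \eqref{27} to rewrite the resulting expansion as a polynomial in $F$. Comparing coefficient by coefficient against \eqref{05} gave the two explicit formulas \eqref{23}. Since both descriptions refer to the same objects --- the coefficients appearing in \eqref{05} --- the formulas of Theorem 3.1 must coincide with the unique solution of the recurrence in Theorem 2.1, and the corollary follows immediately.

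I do not expect a genuine obstacle, because all the real work was done in proving Theorem 3.1. The only alternative route would be to verify directly that the expressions in \eqref{23} satisfy \eqref{15}-\eqref{17}; this would require manipulating sums of the form $\sum_{k,l}(-1)^l\binom{k}{i}\binom{k}{l}(\lambda l)_N$ and reducing them through Vandermonde-type identities and properties of $S_{2,1/\lambda}(N,k)$. That calculation, while feasible, would be the hard part of an independent proof, and it is entirely bypassed by the uniqueness argument above.
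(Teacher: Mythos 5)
Your proposal is correct and matches the paper's intent exactly: the corollary is stated without further argument precisely because it is the conjunction of Theorem 2.1 (the coefficients in \eqref{05} are uniquely determined by \eqref{15}--\eqref{17}) and Theorem 3.1 (those same coefficients are given by \eqref{23}). No additional computation is needed, and your identification of the direct verification of the recurrences as the bypassed "hard part" is accurate.
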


Next, we want to derive explicit expressions for $b_{n,\lambda }$ by using the result in the above theorem.

\begin{thm}
The degenerate Bernoulli numbers of the second kind $b_{n,\lambda }$ are given by
\begin{equation}\begin{split}\label{30}
b_{n,\lambda } &= (-1)^n \bigg\{ \frac{(1)_{n+1,\lambda }}{n+1} + \sum_{i=0}^{n-1} \frac{(1)_{i+1,\lambda }}{(i+1)!} (a_{i,\lambda }(n)-na_{i,\lambda }(n-1))\bigg\}\\
&=\frac{(-1)^n (1)_{n+1,\lambda }}{n+1} + \sum_{i=0}^{n-1} \sum_{k=i}^n \frac{(1)_{i+1,\lambda }}{(i+1)!} (-1)^k k! {k \choose i} \lambda ^{n-i-1} \\
&\quad \times (\lambda S_{2,\frac{1}{\lambda }}(n,k) + nS_{2,\frac{1}{\lambda }}(n-1,k))\\
&=\frac{(-1)^n (1)_{n+1,\lambda }}{n+1} + \sum_{i=0}^{n-1} \frac{(1)_{i+1,\lambda }}{(i+1)!}\lambda ^{-i}\\
&\quad \times \bigg\{ \sum_{l=0}^n (-1)^l {n \choose i} {n \choose l} (\lambda l)_n + \sum_{k=i}^{n-1} \sum_{l=0}^k (-1)^l {k \choose i} {k \choose l} (\lambda l+1)_n \bigg\},
\end{split}\end{equation}
where we understand that $S_{2,\frac{1}{\lambda }}(n-1,n)=0$.
\end{thm}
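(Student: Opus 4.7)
The plan is to extract $b_{n,\lambda}$ as a ``Laurent constant term'' obtained from the differential equations of Theorem~2.1. Setting $G(t)=tF(t)=\sum_{m\geq 0}b_{m,\lambda}t^m/m!$, we have $b_{n,\lambda}=G^{(n)}(0)$, and the product rule yields $G^{(n)}(t)=tF^{(n)}(t)+nF^{(n-1)}(t)$. Multiplying by $(1+t)^n$ and substituting the differential equations $(-1)^{N}(1+t)^N F^{(N)}=\sum_{i}a_{i,\lambda}(N)F^{i+1}$ for $N=n$ and $N=n-1$ gives
\begin{equation*}
(1+t)^n G^{(n)}(t) = (-1)^n\Bigl[\,t\sum_{i=0}^{n} a_{i,\lambda}(n)F^{i+1}-n(1+t)\sum_{i=0}^{n-1} a_{i,\lambda}(n-1)F^{i+1}\,\Bigr],
\end{equation*}
whose left-hand side at $t=0$ equals $b_{n,\lambda}$.

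The right-hand side cannot be evaluated at $t=0$ directly, since each $F^{i+1}$ has a pole there, so I would switch to the variable $s=\log_\lambda(1+t)$. Then $F=1/s$, $1+t=e_\lambda(s)$ and $t=e_\lambda(s)-1=\sum_{m\geq 1}(1)_{m,\lambda}s^m/m!$, so the singular pieces become the clean rational functions
\begin{equation*}
tF^{i+1}=\frac{e_\lambda(s)-1}{s^{i+1}},\qquad (1+t)F^{i+1}=\frac{e_\lambda(s)}{s^{i+1}},
\end{equation*}
each of which has $s^0$-coefficient $c_i:=(1)_{i+1,\lambda}/(i+1)!$ (the coefficient of $s^{i+1}$ in $e_\lambda(s)$). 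Since $(1+t)^n G^{(n)}(t)$ is analytic at $t=0$, hence at $s=0$, its value there equals the $s^0$-coefficient of the bracketed expression, and by linearity this is $\sum_{i=0}^{n}c_i a_{i,\lambda}(n)-n\sum_{i=0}^{n-1}c_i a_{i,\lambda}(n-1)$. Writing this with the convention $a_{n,\lambda}(n-1)=0$ and extracting the $i=n$ contribution $c_n\cdot n!=(1)_{n+1,\lambda}/(n+1)$, which uses $a_{n,\lambda}(n)=n!$ from \eqref{17}, yields the first expression of \eqref{113}.

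The remaining two equalities follow by inserting the explicit forms of $a_{i,\lambda}(N)$ from Theorem~3.1 into $a_{i,\lambda}(n)-n a_{i,\lambda}(n-1)$. The $S_{2,1/\lambda}$ version gives the second expression directly, after factoring out $(-1)^n\lambda^{n-i-1}$ and combining into $\lambda S_{2,1/\lambda}(n,k)+n S_{2,1/\lambda}(n-1,k)$, with the convention $S_{2,1/\lambda}(n-1,n)=0$. The double-sum version needs the elementary identity $n(\lambda l)_{n-1}=(\lambda l+1)_n-(\lambda l)_n$, which comes from $(\lambda l)_n+n(\lambda l)_{n-1}=(\lambda l+1)(\lambda l)_{n-1}=(\lambda l+1)_n$; substituting it turns the $n(\lambda l)_{n-1}$ contribution into a $(\lambda l+1)_n$ piece plus a $(\lambda l)_n$ piece that cancels the $k\leq n-1$ portion of the $(\lambda l)_n$ sum in $a_{i,\lambda}(n)$, leaving only the $k=n$ block; collecting everything produces the third expression. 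The main obstacle is the first step, where the pole cancellations must be handled; the substitution $s=\log_\lambda(1+t)$ resolves this cleanly because under it both $F$ and $1+t$ take their defining forms, making the relevant constant terms immediately visible from the Taylor series of $e_\lambda(s)$.
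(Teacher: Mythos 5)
Your proposal is correct and follows essentially the same route as the paper: write $b_{n,\lambda}=(tF)^{(n)}\big|_{t=0}$ with $(tF)^{(n)}=tF^{(n)}+nF^{(n-1)}$, substitute the differential equations of Theorem 2.1, pass to the variable $w=\log_\lambda(1+t)$ so that $1+t=e_\lambda(w)$, and read off the relevant coefficient of $e_\lambda(w)$ (the paper phrases this as $\lim_{w\to 0}S_{n,w,\lambda}/w^{n+1}$ rather than as a Laurent constant term, but it is the same computation), then insert the two explicit forms of $a_{i,\lambda}(N)$ from Theorem 3.1. Your identity $n(\lambda l)_{n-1}=(\lambda l+1)_n-(\lambda l)_n$ correctly supplies the step the paper leaves implicit for the third expression.
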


\begin{proof}
First, we note that
\begin{equation}\begin{split}\label{31}
b_{n,\lambda }= \lim_{t \rightarrow 0} (tF(t))^{(n)},
\end{split}\end{equation}
where, from \eqref{01} and \eqref{05}, we have
\begin{equation}\begin{split}\label{32}
&(tF(t))^{(n)} = tF(t)^{(n)}+nF(t)^{(n-1)}= \left( \frac{-1}{1+t} \right)^n\\
&\times \frac{t \sum_{i=0}^n a_{i,\lambda }(n) (\log_\lambda (1+t))^{n-i}-n(1+t)\sum_{i=0}^{n-1}a_{i,\lambda }(n-1)(\log_\lambda (1+t))^{n-i}  }{(\log_\lambda (1+t))^{n+1}}.
    \end{split}\end{equation}

Thus, from \eqref{31} and \eqref{32}, we obtain
\begin{equation}\begin{split}\label{33}
b_{n,\lambda }=(-1)^n \lim_{w \rightarrow 0 } \frac{S_{n,w,\lambda }}{w^{n+1}}.
\end{split}\end{equation}
Here
\begin{equation}\begin{split}\label{34}
S_{n,w,\lambda } &= (e_\lambda w -1)\sum_{i=0}^n a_{i,\lambda }(n) w^{n-i} -n e_\lambda  w \sum_{i=0}^{n-1} a_{i,\lambda }(n-1) w^{n-i}\\
&= (e_\lambda w -1) a_{n,\lambda }(n) - \sum_{i=0}^{n-1} a_{i,\lambda }(n) w^{n-i}\\
&\quad + \sum_{i=0}^{n-1} (a_{i,\lambda }(n) - n a_{i,\lambda }(n-1) ) w^{n-i} e_\lambda w\\
&= n! \sum_{j=1}^\infty \frac{(1)_{j,\lambda }}{j!} w^j - \sum_{j=1}^n a_{n-j,\lambda }(n) w^{j}\\&\quad +
\sum_{i=1}^n (a_{n-i,\lambda }(n) - n a_{n-i,\lambda }(n-1))w^i \sum_{l=0}^\infty \frac{(1)_{l,\lambda }}{l!}w^l\\
&= n! \sum_{j=1}^\infty \frac{(1)_{j,\lambda }}{j!} w^j -\sum_{j=1}^n a_{n-j,\lambda }(n) w^{j}\\&\quad +
\sum_{j=1}^\infty \sum_{i=1}^{\min\left\{n,j\right\}}  \frac{(1)_{j-i,\lambda }}{(j-i)!} (a_{n-i,\lambda }(n) -n a_{n-i,\lambda }(n-1)) w^j\\
&=\sum_{j=1}^n  \bigg\{ n! \frac{(1)_{j,\lambda }}{j!}- a_{n-j,\lambda }(n) + \sum_{i=1}^j \frac{(1)_{j-i,\lambda }}{(j-i)!} (a_{n-i,\lambda }(n) - na_{n-i,\lambda }(n-1)) \bigg\} w^j\\
&+ \sum_{j=n+1}^\infty \bigg\{ n! \frac{(1)_{j,\lambda }}{j!} + \sum_{i=1}^n \frac{(1)_{j-i,\lambda }}{(j-i)!} (a_{n-i,\lambda }(n) - na_{n-i,\lambda }(n-1)) \bigg\} w^j.\\
\end{split}\end{equation}

Hence, from \eqref{33} and \eqref{34}, we see that
\begin{equation}\begin{split}\label{35}
&\sum_{i=1}^j \frac{(1)_{j-i,\lambda }}{(j-i)!} \big(a_{n-i,\lambda }(n) - na_{n-i,\lambda }(n-1)\big)\\
&=a_{n-j,\lambda }(n) - n! \frac{(1)_{j,\lambda }}{j!},\,\,(1 \leq j \leq n),
\end{split}\end{equation}
and
\begin{equation}\begin{split}\label{36}
b_{n,\lambda } &= (-1)^n \bigg\{ n! \frac{(1)_{n+1,\lambda }}{(n+1)!} + \sum_{i=1}^{n} \frac{(1)_{n+1-i,\lambda }}{(n+1-i)!} (a_{n-i,\lambda }(n)-na_{n-i,\lambda }(n-1))\bigg\}\\
&= (-1)^n \bigg\{ \frac{(1)_{n+1,\lambda }}{n+1} + \sum_{i=0}^{n-1} \frac{(1)_{i+1,\lambda }}{(i+1)!} (a_{i,\lambda }(n)-na_{i,\lambda }(n-1))\bigg\}\\
\end{split}\end{equation}

Finally, the rest of assertions in \eqref{30} follows from \eqref{36} together with \eqref{23}.
\end{proof}

During the course of the proof of the above theorem, we have shown the following result.

\begin{cor}
For each $n=1,2,\cdots,$ the sequences $\{ a_{j,\lambda }(n)\}_{j=0}^{n-1}$ and $\{ a_{j,\lambda }(n-1)\}_{j=0}^{n-1}$ satisfy the following identities.
\begin{equation*}\begin{split}
&\sum_{i=1}^j \frac{(1)_{j-i,\lambda }}{(j-i)!} \big( a_{n-i,\lambda }(n) - na_{n-i,\lambda }(n-1) \big)\\
&= a_{n-j,\lambda }(n) - n! \frac{(1)_{j,\lambda }}{j!},\,\,(1 \leq j \leq n).
\end{split}\end{equation*}
\end{cor}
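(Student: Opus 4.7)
The plan is to obtain the identity as the necessary coefficient-vanishing condition that comes from expressing $b_{n,\lambda}$ as a limit. Since $tF(t)$ is the generating function for the numbers $b_{n,\lambda}$ (see \eqref{107}), I have $b_{n,\lambda} = \lim_{t \to 0}(tF(t))^{(n)}$. By Leibniz, $(tF(t))^{(n)} = tF^{(n)}(t) + nF^{(n-1)}(t)$, and substituting \eqref{05} for both $F^{(n)}$ and $F^{(n-1)}$ produces a single rational expression with leading factor $(-1/(1+t))^n$ and common denominator $(\log_\lambda(1+t))^{n+1}$.

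Next I would change variable by setting $w = \log_\lambda(1+t)$, so that $1+t = e_\lambda(w)$, $t = e_\lambda(w) - 1$, and $w \to 0$ as $t \to 0$. The prefactor $(-1/(1+t))^n$ then tends to $(-1)^n$, and the limit rewrites as
\begin{equation*}
b_{n,\lambda} \;=\; (-1)^n \lim_{w \to 0} \frac{S_{n,w,\lambda}}{w^{n+1}},
\end{equation*}
where
\begin{equation*}
S_{n,w,\lambda} \;=\; (e_\lambda(w)-1)\sum_{i=0}^{n} a_{i,\lambda}(n)\, w^{n-i} \;-\; n\, e_\lambda(w) \sum_{i=0}^{n-1} a_{i,\lambda}(n-1)\, w^{n-i}.
\end{equation*}

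Using the series $e_\lambda(w) = \sum_{k \geq 0} (1)_{k,\lambda}\, w^k/k!$, I would expand $S_{n,w,\lambda}$ as a power series in $w$ via two Cauchy products and reindex by substituting $i \mapsto n-i$ so that the sums over $a_{n-i,\lambda}(n)$ and $a_{n-i,\lambda}(n-1)$ align. The point is that in order for $S_{n,w,\lambda}/w^{n+1}$ to admit a finite limit, the coefficient of $w^j$ in $S_{n,w,\lambda}$ must vanish for every $j$ with $1 \leq j \leq n$; equating each such coefficient to zero produces exactly the stated identity, and the coefficient of $w^{n+1}$ then yields the formula in Theorem 3.2 as a byproduct.

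The only nontrivial part is the bookkeeping: correctly merging the two Cauchy products with the isolated polynomial terms, separating out the $i = n$ contribution where $a_{n,\lambda}(n) = n!$ (from \eqref{17}) to obtain the $n!\,(1)_{j,\lambda}/j!$ summand, and handling the boundary terms for the truncated $(n-1)$-sum. I expect no conceptual obstacle beyond this reindexing, since the whole identity is essentially forced once one demands that the spurious powers $w^1,\ldots,w^n$ in the numerator cancel.
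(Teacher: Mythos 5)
Your proposal is correct and follows essentially the same route as the paper: writing $b_{n,\lambda}=\lim_{t\to 0}(tF(t))^{(n)}$, substituting the differential equation \eqref{05} into $tF^{(n)}+nF^{(n-1)}$, passing to the variable $w=\log_\lambda(1+t)$ to get $b_{n,\lambda}=(-1)^n\lim_{w\to 0}S_{n,w,\lambda}/w^{n+1}$ with the same $S_{n,w,\lambda}$, and reading off the identity from the forced vanishing of the coefficients of $w^1,\dots,w^n$. This is precisely how the paper obtains \eqref{35}, with \eqref{36} emerging from the coefficient of $w^{n+1}$ as you note.
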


\begin{thm} For $N \geq k \geq 0$, we have
\begin{equation}\begin{split}\label{37}
&\lim_{\lambda \rightarrow 0} \lambda ^{N-k} S_{2,\frac{1}{\lambda }} (N,k) \\
&= \lim_{\lambda  \rightarrow 0 } \lambda ^{N-k} B_{N,k} \big( (1)_{1,\frac{1}{\lambda }},(1)_{2,\frac{1}{\lambda }},\cdots,(1)_{N-k+1,\frac{1}{\lambda }} \big) = S_1(N,k),
\end{split}\end{equation}
where $S_1(N,k)$ are the signed Stirling numbers of the first kind.
\end{thm}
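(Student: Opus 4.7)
The plan is to use the homogeneity relation \eqref{22} for $B_{N,k}$ to absorb the prefactor $\lambda^{N-k}$ into the arguments, and then use continuity of $B_{N,k}$ in its arguments (it is, after all, a polynomial) together with the classical generating function for the unsigned/signed Stirling numbers of the first kind.

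First, the equality of the two limits in \eqref{37} is just \eqref{20} with $\lambda$ replaced by $1/\lambda$, so there is really only one quantity to analyze. I would then choose $a=1/\lambda$ and $b=\lambda$ in \eqref{22}, so that $ab^i=\lambda^{i-1}$ and $a^kb^N=\lambda^{N-k}$; this rewrites
\begin{equation*}
\lambda^{N-k}B_{N,k}\bigl((1)_{1,\frac{1}{\lambda}},(1)_{2,\frac{1}{\lambda}},\dots,(1)_{N-k+1,\frac{1}{\lambda}}\bigr)
=B_{N,k}\bigl(y_1,y_2,\dots,y_{N-k+1}\bigr),
\end{equation*}
where $y_i=\lambda^{i-1}(1)_{i,\frac{1}{\lambda}}$. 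The next step is the elementary computation, from \eqref{110}, that
\begin{equation*}
y_i=\lambda^{i-1}\cdot 1\cdot\Bigl(1-\tfrac{1}{\lambda}\Bigr)\Bigl(1-\tfrac{2}{\lambda}\Bigr)\cdots\Bigl(1-\tfrac{i-1}{\lambda}\Bigr)=(\lambda-1)(\lambda-2)\cdots(\lambda-(i-1)),
\end{equation*}
which is a polynomial in $\lambda$ with $y_i\bigr|_{\lambda=0}=(-1)^{i-1}(i-1)!$.

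Since $B_{N,k}$ is a polynomial in its arguments, continuity immediately gives
\begin{equation*}
\lim_{\lambda\to 0}\lambda^{N-k}S_{2,\frac{1}{\lambda}}(N,k)
=B_{N,k}\bigl(0!,-1!,\,2!,\,-3!,\dots,(-1)^{N-k}(N-k)!\bigr).
\end{equation*}
The final step is to identify the right-hand side with $S_1(N,k)$. For this I would invoke the generating function \eqref{19} and the Taylor expansion
\begin{equation*}
\log(1+t)=\sum_{i=1}^\infty (-1)^{i-1}(i-1)!\,\frac{t^i}{i!},
\end{equation*}
which gives $\frac{1}{k!}(\log(1+t))^k=\sum_{N\ge k}B_{N,k}(0!,-1!,2!,\dots)\frac{t^N}{N!}$, and this series is by definition $\sum_{N\ge k}S_1(N,k)\frac{t^N}{N!}$.

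There is no real obstacle here; the scaling \eqref{22} does all the work, and everything reduces to the fact that $\log(1+t)$ is the $\lambda\to 0$ limit of $\log_\lambda(1+t)$ with the correct choice of scaling exponents. The only thing to watch is the careful bookkeeping of the powers of $\lambda$ in the substitution, so that the left factor $\lambda^{N-k}$ matches $a^kb^N$ under \eqref{22}.
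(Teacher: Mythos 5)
Your argument is correct and follows the paper's first proof almost verbatim: the homogeneity relation \eqref{22} absorbs the prefactor $\lambda^{N-k}$ into the arguments $y_i=(\lambda-1)(\lambda-2)\cdots(\lambda-(i-1))$, which are polynomials in $\lambda$, and continuity of the polynomial $B_{N,k}$ then yields $B_{N,k}\bigl(0!,-1!,2!,\dots,(-1)^{N-k}(N-k)!\bigr)$ in the limit. The only (harmless) difference is in the final identification with $S_1(N,k)$: you obtain it from the generating function \eqref{19} applied to the Taylor coefficients of $\log(1+t)$ --- which is essentially the paper's second proof specialized to $\lambda=0$ --- whereas the paper factors out $(-1)^{N-k}$ by homogeneity once more and cites the known evaluation $B_{N,k}\bigl(0!,1!,\dots,(N-k)!\bigr)=|S_1(N,k)|$ from Comtet.
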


\begin{proof}
Here we give two different proofs for this. From \eqref{20} and \eqref{22}, we note that
\begin{equation}\begin{split}\label{38}
&B_{N,k}\big(1,(\lambda -1),(\lambda -1)(\lambda -2),\cdots,(\lambda -1),(\lambda -2)\cdots(\lambda -(N-k)) \big)\\
&= \lambda ^{N-k} B_{N,k} \Big( (1)_{1,\frac{1}{\lambda }}, (1)_{2,\frac{1}{\lambda }}, \cdots, (1)_{N-k+1,\frac{1}{\lambda }}\Big) \\
&= \lambda ^{N-k} S_{2, \frac{1}{\lambda }}(N,k).
\end{split}\end{equation}
Thus from \eqref{38}, we have
\begin{equation*}\begin{split}
&\lim_{\lambda \rightarrow 0} \lambda ^{N-k} S_{2,\frac{1}{\lambda }}(N,k)\\
&=B_{N,k}\Big( (-1)^0 0!, (-1)^1 1!, (-1)^2 2!, \cdots, (-1)^{N-k}(N-k)! \Big)\\
&= (-1)^{N-k} B_{N,k} \Big(0!, 1!, 2!, \cdots, (N-k)! \Big)\\
&= (-1)^{N-k} | S_1(N,k) |\\
&= S_1(N,k),\quad (\text{see}\,\, [3,\,\text{p}.135]).
\end{split}\end{equation*}
For another proof, we consider
\begin{equation}\begin{split}\label{39}
&\sum_{N=k}^\infty \lambda ^{N-k} B_{N,k}\Big( (1)_{1,\frac{1}{\lambda }},(1)_{2, \frac{1}{\lambda }},\cdots,(1)_{N-k+1,\frac{1}{\lambda }} \Big) \frac{t^N}{N!}\\
&= \lambda ^{-k} \sum_{N=k}^\infty  B_{N,k}\Big( (1)_{1,\frac{1}{\lambda }},(1)_{2, \frac{1}{\lambda }},\cdots,(1)_{N-k+1,\frac{1}{\lambda }} \Big) \frac{(\lambda t)^N}{N!}\\
&= \lambda ^{-k} \frac{1}{k!} \left( \sum_{i=1}^\infty (1)_{i,\frac{1}{\lambda }} \frac{(\lambda t)^i}{i!} \right)^k\\
&= \lambda ^{-k} \frac{1}{k!} \Big( (1+t)^\lambda -1\big)^k\\
&= \frac{1}{k!} \Big(\log_\lambda (1+t)\Big)^k.
\end{split}\end{equation}
The statement follows now from \eqref{38} and \eqref{39}.
\end{proof}

\begin{cor}
For $0 \leq i \leq N$,
\begin{equation}\begin{split}\label{40}
\lim_{\lambda \rightarrow 0} a_{i,\lambda }(N) = (-1)^N (-1)^i i! S_1(N,i).
\end{split}\end{equation}
\end{cor}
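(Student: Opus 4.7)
The plan is to extract \eqref{40} directly from the second explicit form of $a_{i,\lambda}(N)$ in \eqref{23} combined with the limiting relation \eqref{37}. By \eqref{23},
\begin{equation*}
a_{i,\lambda}(N) = (-1)^N \lambda^{N-i} \sum_{k=i}^N (-1)^k k! {k \choose i} S_{2,\frac{1}{\lambda}}(N,k).
\end{equation*}

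First I would split the prefactor as $\lambda^{N-i}=\lambda^{k-i}\cdot\lambda^{N-k}$ and push it inside the sum, obtaining
\begin{equation*}
a_{i,\lambda}(N) = (-1)^N \sum_{k=i}^N (-1)^k k! {k \choose i} \lambda^{k-i}\bigl(\lambda^{N-k} S_{2,\frac{1}{\lambda}}(N,k)\bigr).
\end{equation*}
The point of this rearrangement is that, by \eqref{38}, each companion factor $\lambda^{N-k} S_{2,\frac{1}{\lambda}}(N,k)$ is a Bell polynomial evaluated at arguments that are polynomials in $\lambda$, hence is itself a polynomial in $\lambda$ and has a finite limit at $\lambda=0$. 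Since the sum is finite, passing to the limit inside the sum is immediate.

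Now I would apply \eqref{37} term by term: $\lambda^{N-k} S_{2,\frac{1}{\lambda}}(N,k)\to S_1(N,k)$ as $\lambda\to 0$, while the auxiliary prefactor $\lambda^{k-i}$ tends to zero for every $k>i$ and equals $1$ precisely when $k=i$. Only the $k=i$ summand survives, and it yields
\begin{equation*}
\lim_{\lambda\to 0} a_{i,\lambda}(N) = (-1)^N(-1)^i i! {i \choose i} S_1(N,i) = (-1)^N(-1)^i i! S_1(N,i),
\end{equation*}
which is \eqref{40}. There is essentially no obstacle: the corollary is a two-line consequence of the identities already quoted. The only detail requiring care is the regularity of $\lambda^{N-k} S_{2,\frac{1}{\lambda}}(N,k)$ at $\lambda=0$ so that term-by-term limits are legitimate, and this is exactly what \eqref{38} provides.
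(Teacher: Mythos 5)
Your proposal is correct and follows the paper's own proof essentially verbatim: the paper also rewrites \eqref{23} as $a_{i,\lambda }(N) = (-1)^N \sum_{k=i}^N \lambda ^{k-i}(-1)^k k! \binom{k}{i}\lambda^{N-k} S_{2,\frac{1}{\lambda }}(N,k)$ and then invokes \eqref{37} so that only the $k=i$ term survives. Your extra remark justifying the term-by-term limit via \eqref{38} is a small but welcome addition of rigor.
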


\begin{proof}
From \eqref{23}, we have
\begin{equation*}\begin{split}
a_{i,\lambda }(N) = (-1)^N \sum_{k=i}^N \lambda ^{k-i}(-1)^k k! {k \choose i}\lambda^{N-k} S_{2,\frac{1}{\lambda }}(N,k).
\end{split}\end{equation*}

Now, the result follows from \eqref{37}.
\end{proof}

Making use of \eqref{37} and taking $\lambda \rightarrow 0$ in \eqref{24}, we get
\begin{equation}\begin{split}\label{41}
\left( \frac{1}{\log(1+t)} \right)^{(N)} = \frac{1}{(1+t)^N} \sum_{k=0}^N (-1)^k k! \frac{S_1(N,k)}{(\log(1+t))^{k+1}},\,\,(N \geq 0),
\end{split}\end{equation}
which agrees with the results in \cite{14, 15}.

In the same way, invoking \eqref{40} and taking $\lambda \rightarrow 0$ in \eqref{32}, we obtain
\begin{equation}\begin{split}\label{42}
&\left( \frac{t}{\log(1+t)} \right)^{(n)}\\
&= \frac{1}{(1+t)^n } \sum_{i=0}^n \frac{(-1)^i i! (t S_1(n,i) + n(1+t)S_1(n-1,i) )}{(\log(1+t))^{i+1}},\,\,(n \geq 1),
\end{split}\end{equation}
which again agrees with the results in \cite{14, 15}.

Here we understand that $S_1(n-1,n) =0$.

The Bernoulli numbers of the second kind $b_n$ are defined by the generating function
\begin{equation}\begin{split}\label{43}
\frac{t}{\log(1+t)} = \sum_{n=0}^\infty       b_n \frac{t^n}{n!}.
\end{split}\end{equation}

Thus from \eqref{107} we see that $b_n = \lim_{\lambda  \rightarrow 0}b_{n,\lambda }$. By taking the limit $\lambda  \rightarrow 0$ of the expression in \eqref{30} and making use of either \eqref{37} or \eqref{40}, we obtain
\begin{equation}\begin{split}\label{44}
b_n = \sum_{i=0}^n \frac{(-1)^i}{i+1} \Big( S_1(n,i) + n S_1(n-1,i) \Big),\,\,(n \geq 1),
\end{split}\end{equation}
where $S_1(n-1,n) =0$. Again, we see that \eqref{44} agrees with the result in (\cite{14}, (1.4), (2.18), (3.11)).

Next, we will present two different expansions for $b_{n,\lambda }.$

We first observe that
\begin{equation}\begin{split}\label{45}
\frac{(1+t)^\lambda -1}{\lambda t} = \sum_{n=0}^\infty      \frac{(\lambda -1)_n}{n+1}  \frac{t^n}{n!}.
\end{split}\end{equation}

Thus from \eqref{45} we get
\begin{equation}\begin{split}\label{46}
1&= \left( \sum_{l=0}^\infty b_{l,\lambda } \frac{t^l}{l!} \right) \left( \sum_{m=0}^\infty \frac{(\lambda -1)_m}{m+1} \frac{t^m}{m!} \right) \\
&= \sum_{n=0}^\infty   \left( \sum_{l=0}^n \frac{1}{n-l+1} {n \choose l} (\lambda -1)_{n-l} b_{l,\lambda } \right)    \frac{t^n}{n!}.
\end{split}\end{equation}

Now, we derive the following recurrence relation for $b_{n,\lambda }$ from \eqref{46}:
\begin{equation}\begin{split}\label{47}
b_{0,\lambda }=1,\,\, b_{n,\lambda } = - \sum_{l=0}^{n-1} \frac{1}{n-l+1} {n \choose l} (\lambda -1)_{n-l} b_{l,\lambda },\,\,(n \geq 1).
\end{split}\end{equation}

In order to find another expression for $b_{n,\lambda }$, we consider the following.
\begin{equation}\begin{split}\label{48}
&\sum_{n=0}^\infty       b_{n,\lambda } \frac{t^n}{n!} = \frac{1}{1- (1- \frac{(1+t)^\lambda -1}{\lambda t})}\\
&=\sum_{k=0}^\infty (-1)^k \left( \frac{(1+t)^\lambda -1}{\lambda t} -1 \right)^k\\
&= \sum_{k=0}^\infty (-1)^k \left( \sum_{m=1}^\infty  \frac{(\lambda -1)_m}{m+1}\frac{t^m}{m!} \right)^k\\
&= \sum_{k=0}^\infty (-1)^k \sum_{n=k}^\infty \left( \sum_{\substack{m_1+\cdots+m_k=n\\m_i \geq 1}} {n \choose m_1,\cdots,m_k } \frac{(\lambda -1)_{m_1}}{m_1+1}\cdots\frac{(\lambda -1)_{m_k}}{m_k+1} \right) \frac{t^n}{n!}\\
&= \sum_{n=0}^\infty \left( \sum_{k=0}^n (-1)^k \sum_{\substack{m_1+\cdots+m_k=n\\m_i \geq 1}}  {n \choose m_1,\cdots,m_k }    \frac{(\lambda -1)_{m_1}}{m_1+1}\cdots\frac{(\lambda -1)_{m_k}}{m_k+1}. \right)   \frac{t^n}{n!}.
\end{split}\end{equation}

Thus from \eqref{48}  we see that
\begin{equation}\begin{split}\label{49}
b_{n,\lambda } = \sum_{k=0}^n (-1)^k \sum_{\substack{m_1+\cdots+m_k=n\\m_i \geq 1}}  {n \choose m_1,\cdots,m_k }    \frac{(\lambda -1)_{m_1}}{m_1+1}\cdots\frac{(\lambda -1)_{m_k}}{m_k+1}.
\end{split}\end{equation}

\section{Applications of differential equations}

For any positive integer $r$, the degenerate Bernoulli numbers of the second kind of order $r$ are defined by
\begin{equation}\begin{split}\label{50}
\left( \frac{t}{\log_\lambda (1+t)} \right)^r = \left(\frac{\lambda t}{(1+t)^\lambda -1}
\right)^r = \sum_{n=0}^\infty   b_{n,\lambda }^{(r)}      \frac{t^n}{n!}.
\end{split}\end{equation}

Here, using the nonlinear differential equation in Theorem 2.1, we will derive an identity which expresses the degenerate Bernoulli numbers of the second kind in terms of those numbers of higher-order.

We recall here that the degenerate Bernoulli numbers of the second kind $b_{n,\lambda }$ are given by
\begin{equation}\begin{split}\label{51}
\frac{t}{\log_\lambda (1+t)} = tF(t) = \sum_{j=0}^\infty b_{j,\lambda } \frac{t^j}{j!}.
\end{split}\end{equation}

On the one hand, from \eqref{51} we have
\begin{equation}\begin{split}\label{52}
\sum_{j=0}^\infty b_{j+N,\lambda } \frac{t^j}{j!} = \Big( tF(t) \Big)^{(N)}.
\end{split}\end{equation}

On the other hand, for $N \geq 1$ and from Theorem 2.1 we have
\begin{equation}\begin{split}\label{53}
&\Big(tF(t)\Big)^{(N)} = t F(t) ^{(N)} + N F(t) ^{(N-1)}\\
&=\left( \frac{-1}{1+t} \right)^N \bigg\{ t \sum_{i=0}^N a_{i,\lambda }(N) F^{i+1} - N(1+t) \sum_{i=0}^{N-1} a_{i,\lambda }(N-1) F^{i+1} \bigg\}\\
&=\left( \frac{-1}{1+t} \right)^N \bigg\{\sum_{i=0}^N a_{i,\lambda }(N) t^{-i} \Big( tF\Big)^{i+1} - N \sum_{i=0}^{N-1} a_{i,\lambda }(N-1) t^{-i-1} \Big(tF\Big)^{i+1} \\
&\quad -N \sum_{i=0}^{N-1} a_{i,\lambda }(N-1) t^{-i} \Big( tF \Big)^{i+1} \bigg\}\\
&= \sum_{m=0}^\infty (-1)^{N+m} {N+m-1 \choose m} t^m \\
&\quad \times \bigg\{ \sum_{i=0}^N \sum_{l=0}^\infty a_{i,\lambda }(N) b_{l,\lambda }^{(i+1)} \frac{t^{l-i}}{l!} -N \sum_{i=0}^{N-1} \sum_{l=0}^\infty a_{i,\lambda }(N-1) b_{l,\lambda }^{(i+1)} \frac{t^{l-i-1}}{l!}\\
&\qquad - N \sum_{i=0}^{N-1} \sum_{l=0}^\infty a_{i,\lambda }(N-1) b_{l,\lambda }^{(i+1)} \frac{t^{l-i}}{l!} \bigg\}\\
&=\sum_{m=0}^\infty \sum_{i=0}^N \sum_{l=0}^\infty (-1)^{N+m} {N+m-1 \choose m} a_{i,\lambda }(N) b_{l,\lambda }^{(i+1)} \frac{t^{m+l-i}}{l!} \\
&\quad -N\sum_{m=0}^\infty \sum_{i=0}^{N-1} \sum_{l=0}^\infty (-1)^{N+m} {N +m -1 \choose m} a_{i,\lambda }(N-1) b_{l,\lambda }^{(i+1)}  \frac{t^{m+l-i-1}}{l!} \\
&\quad -N \sum_{m=0}^\infty \sum_{i=0}^{N-1} \sum_{l=0}^\infty (-1)^{N+m} {N+m-1 \choose m} a_{i,\lambda }(N-1) b_{l,\lambda }^{(i+1)} \frac{t^{m+l-i}}{l!}.
\end{split}\end{equation}

The first sum in \eqref{53} is
\begin{equation}\begin{split}\label{54}
\sum_{m=0}^\infty \sum_{i=0}^N \sum_{l=0}^\infty C_{m,i,l}(N) t^{m+l-i},
\end{split}\end{equation}

with
\begin{equation*}\begin{split}
C_{m,i,l} (N) = (-1)^{N+m} {N+m-1 \choose m} a_{i,\lambda }(N) b_{l,\lambda }^{(i+1)} \frac{1}{l!}.
\end{split}\end{equation*}

We observe that \eqref{54} is equal to
\begin{equation}\begin{split}\label{55}
&\sum_{m=0}^\infty \sum_{i=0}^N \sum_{l=-i}^\infty  C_{m,i,l+i}(N) t^{m+l} \\
&= \sum_{m=0}^\infty \sum_{l=0}^\infty \sum_{i=0}^N C_{m,i,l+i} (N) t^{m+l}
+ \sum_{m=0}^\infty \sum_{l=-N}^{-1} \sum_{i=-l}^N C_{m,i,l+i} (N) t^{m+l} \\
&= \sum_{j=0}^\infty \sum_{l=0}^j \sum_{i=0}^N C_{j-l, i,l+i}(N) t^j
+ \sum_{j=-N}^\infty \sum_{l=-N}^{\min\left\{-1,j\right\}} \sum_{i=-l}^N C_{j-l, i,l+i}(N) t^j \\
&= \sum_{j=0}^\infty \sum_{l=0}^j \sum_{i=0}^N C_{j-l, i, l+i}(N) t^j
+ \sum_{j=0}^\infty \sum_{l=-N}^{-1} \sum_{i=-l}^N C_{j-l,i,l+i} (N) t^j \\
& \quad + \sum_{j=-N}^{-1} \sum_{l=-N}^j \sum_{i=-l}^N C_{j-l,i,l+i}(N) t^j\\
&= \sum_{j=0}^\infty \sum_{l=-N}^j \sum_{i=\max\left\{0,-l\right\}}^N C_{j-l,i,l+i}(N) t^j
+ \sum_{j=-N}^{-1} \sum_{l=-N}^j \sum_{i=\max\left\{0,-l\right\}}^N C_{j-l,i,l+i}(N) t^j\\
&= \sum_{j=-N}^\infty  \sum_{l=-N}^j \sum_{i=\max\left\{0,-l\right\}}^N C_{j-l,i,l+i}(N) t^j
\end{split}\end{equation}
\begin{equation*}\begin{split}
&= \sum_{j=-N}^\infty \sum_{l=-N}^j \sum_{i=\max\left\{0,-l\right\}}^N (-1)^{N+j+l} {N+j-l-1 \choose j-l} a_{i,\lambda }(N) b_{l+i,\lambda }^{(i+1)} \frac{1}{(l+i)!} t^j.
\end{split}\end{equation*}

Proceeding in the same way, the second and third sums in \eqref{53} are respectively given by
\begin{equation}\begin{split}\label{56}
&\sum_{j=-N}^\infty \sum_{l=-N}^j \sum_{i=\max\left\{0,-l-1\right\}}^{N-1} N(-1)^{N+j+l+1} {N+j-l-1 \choose j-l}\\
&\times a_{i,\lambda } (N-1) b_{l+i+1,\lambda }^{(i+1)} \frac{1}{(l+i+1)!} t^j,
\end{split}\end{equation}
\begin{equation}\begin{split}\label{57}
&\sum_{j=-(N-1)}^\infty \sum_{l=-(N-1)}^j \sum_{i=\max\left\{0,-l\right\}}^{N-1} N (-1)^{N+j+l+1} {N+j-l-1 \choose j-l}\\
&\times a_{i,\lambda }(N-1) b_{l+i,\lambda }^{(i+1)} \frac{1}{(l+i)!} t^j.
\end{split}\end{equation}

Now, from \eqref{55}-\eqref{57} and separating the sum into the singular and nonsingular ones we obtain

\begin{equation}\begin{split}\label{58}
&\Big(t F(t) \Big)^{(N)}\\
&= \sum_{j=0}^\infty \bigg\{ \sum_{l=-N}^j \sum_{i=\max\left\{0,-l\right\}}^N (-1)^{N+j+l} {N+j-l-1 \choose j-l} a_{i,\lambda }(N) b_{l+i,\lambda }^{(i+1)} \frac{j!}{(l+i)!} \\
&\quad + \sum_{l=-N}^j \sum_{i=\max\left\{0,-l-1\right\}}^{N-1}  N(-1)^{N+j+l+1} {N+j-l-1 \choose j-l}\\
&\qquad \times  a_{i,\lambda }(N-1) b_{l+i+1,\lambda }^{(i+1)} \frac{j!}{(l+i+1)!}\\
&\quad +\sum_{l=-(N-1)}^j \sum_{i=\max\left\{0,-l\right\}}^{N-1} N (-1)^{N+j+l+1} {N+j-l-1 \choose j-l}\\&\qquad \times  a_{i,\lambda }(N-1)  b_{l+i,\lambda }^{(i+1)} \frac{j!}{(l+i)!} \Bigg\} \frac{t^j}{j!}\\
&\quad + \sum_{j=-(N-1)}^{-1} \bigg\{ \sum_{l=-N}^j \sum_{i=-l}^N (-1)^{N+j+l} {N+j-l-1 \choose j-l}\\& \qquad \times a_{i,\lambda }(N) b_{l+i,\lambda }^{(i+1)} \frac{1}{(l+i)!}\\
&= \sum_{l=-N}^j \sum_{i=-l-1}^{N-1} N (-1)^{N+j+l+1} {N+j-l-1 \choose j-l} \\
&\qquad \times a_{i,\lambda }(N-1) b_{l+i+1,\lambda}^{(i+1)}  \frac{1}{(l+i+1)!}\\
&\quad + \sum_{l=-(N-1)}^j \sum_{i=-l}^{N-1} N (-1)^{N+j+l+1} {N+j-l-1 \choose j-l} \\
&\qquad a_{i,\lambda }(N-1) b_{l+i,\lambda }^{(i+1)} \frac{1}{(l+i)!} \bigg\} t^j.
\end{split}\end{equation}

Finally, comparing \eqref{52} and \eqref{58} we get the following theorem and corollary.

\begin{thm}
For $j=0,1,2,\cdots,$ and $N=1,2,3,\cdots,$ we have the following identity.
\begin{equation*}\begin{split}
&b_{j+N,\lambda } \\
&= \sum_{l=-N}^j \sum_{i=\max\left\{0,-l\right\}}^N (-1)^{N+j+l} {N +j-l-1 \choose j-l}\\
&\qquad \times  a_{i,\lambda }(N) b_{l+i,\lambda }^{(i+1)} \frac{j!}{(l+i)!}\\
&\quad + \sum_{l=-N}^j \sum_{i=\max\left\{0,-l-1\right\}}^{N-1} N (-1)^{N+j+l+1} {N+j-l-1 \choose j-l} \\
&\qquad a_{i,\lambda }(N-1) b_{l+i+1,\lambda }^{(i+1)} \frac{j!}{(l+i+1)!}\\
&\quad + \sum_{l=-(N-1)}^j \sum_{i=\max\left\{0,-l\right\}}^{N-1} N (-1)^{N+j+l+1} {N+j-l-1 \choose j-l} \\ &\qquad \times a_{i,\lambda } (N-1) b_{l+i,\lambda }^{(i+1)} \frac{j!}{(l+1)!}.
\end{split}\end{equation*}
\end{thm}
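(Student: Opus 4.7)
The plan is to compute $(tF(t))^{(N)}$ in two different ways and match power-series coefficients. On one side, differentiating \eqref{51} $N$ times term by term immediately produces
\[
(tF(t))^{(N)} = \sum_{j=0}^\infty b_{j+N,\lambda}\,\frac{t^j}{j!},
\]
which supplies the left-hand side of the desired identity. On the other side, the Leibniz rule gives $(tF)^{(N)} = tF^{(N)} + NF^{(N-1)}$, and substituting the differential equations of Theorem~2.1 for $F^{(N)}$ and $F^{(N-1)}$, after factoring out $(-1/(1+t))^N$, yields
\[
(tF)^{(N)} = \Big(\tfrac{-1}{1+t}\Big)^N\bigg\{t\sum_{i=0}^N a_{i,\lambda}(N) F^{i+1} - N(1+t)\sum_{i=0}^{N-1} a_{i,\lambda}(N-1) F^{i+1}\bigg\}.
\]

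The next step is to pass every factor into a power series. The crucial observation is that, by \eqref{50}, $(tF)^{i+1} = \sum_{l=0}^\infty b_{l,\lambda}^{(i+1)} t^l/l!$, and consequently $F^{i+1} = t^{-(i+1)} \sum_{l=0}^\infty b_{l,\lambda}^{(i+1)} t^l/l!$. Distributing the outer $t$ and the split $-N(1+t) = -N - Nt$ through the two sums produces three pieces whose typical terms carry powers $t^{l-i}$, $t^{l-i-1}$, and $t^{l-i}$ respectively. Expanding the prefactor by the generalized binomial series
\[
\Big(\tfrac{-1}{1+t}\Big)^N = \sum_{m=0}^\infty (-1)^{N+m}\binom{N+m-1}{m} t^m
\]
converts $(tF)^{(N)}$ into three triple sums indexed by $m$, $i$, $l$, with coefficients assembled from $a_{i,\lambda}(\cdot)$, $b_{l,\lambda}^{(i+1)}/l!$, and a binomial factor.

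The main obstacle is the bookkeeping required to extract the coefficient of $t^j$. Because each piece carries a factor $t^{-i}$ or $t^{-i-1}$, one must shift the summation variable (for example $l\mapsto l+i$ or $l\mapsto l+i+1$) so that the exponent becomes $m+l$ and then relabel $j=m+l$. The resulting lower bound $l\geq -i$ or $l\geq -i-1$ becomes, after swapping the order of summation, a bound $i\geq \max\{0,-l\}$ or $i\geq \max\{0,-l-1\}$ on the inner variable; these bounds are precisely the $\max$ expressions appearing in the statement. Each triple sum then splits naturally into a nonsingular part with $j\geq 0$ and a singular part with $-N\leq j\leq -1$. Since $(tF)^{(N)}$ is analytic at $t=0$, the singular parts must cancel among themselves (this serves as a useful consistency check), while matching the nonsingular parts with $\sum_j b_{j+N,\lambda}\,t^j/j!$ term by term yields the claimed three-term formula for $b_{j+N,\lambda}$.
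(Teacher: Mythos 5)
Your proposal is correct and follows essentially the same route as the paper: Leibniz rule plus the differential equations of Theorem~2.1, conversion of $F^{i+1}$ into $t^{-(i+1)}(tF)^{i+1}$ so that the higher-order numbers $b_{l,\lambda}^{(i+1)}$ enter via \eqref{50}, binomial expansion of $\left(\frac{-1}{1+t}\right)^N$, the index shifts producing the $\max\{0,-l\}$ bounds, and the split into singular and nonsingular parts with the singular part vanishing by analyticity of $tF(t)$ at $t=0$ (which is exactly the content of the paper's subsequent corollary).
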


\begin{cor}
For $N \geq 2$ and $-(N-1) \leq j \leq -1$, the following identity holds true.
\begin{equation*}\begin{split}
&\sum_{l=-N}^j \sum_{i=-l}^N (-1)^{N+j+l} {N+j-l-1 \choose j-l} a_{i,\lambda }(N) b_{l+i,\lambda }^{(i+1)} \frac{1}{(l+i)!}\\
&+ \sum_{l=-N}^j \sum_{i=-l-1}^{N-1} N (-1)^{N+j+l+1} {N+j-l-1 \choose j-l} a_{i,\lambda }(N-1) b_{l+i+1,\lambda}^{(i+1)}  \frac{1}{(l+i+1)!}\\
&+ \sum_{l=-(N-1)}^j \sum_{i=-l}^{N-1} N (-1)^{N+j+l+1} {N+j-l-1 \choose j-l} a_{i,\lambda }(N-1) b_{l+i,\lambda }^{(i+1)} \frac{1}{(l+i)!} = 0.
\end{split}\end{equation*}

\end{cor}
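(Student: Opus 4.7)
The plan is to exploit the fact that the corollary is a direct by-product of the same coefficient comparison that produced Theorem 4.1: namely, the comparison of the expansion \eqref{52} with the expansion \eqref{58}. The left-hand side of \eqref{58}, which is $(tF(t))^{(N)}$, is an honest power series in $t$ with only nonnegative exponents, since $tF(t)=t/\log_\lambda(1+t)=\sum_{j\ge 0}b_{j,\lambda}t^j/j!$ is analytic at $t=0$ and differentiation preserves analyticity. Thus in the Laurent expansion on the right-hand side of \eqref{58}, every coefficient attached to $t^j$ with $j<0$ must vanish.

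More concretely, I would first recall how \eqref{58} was obtained: the sums \eqref{55}, \eqref{56}, \eqref{57} were reindexed so that the exponents of $t$ range over $j\ge -N$ (from the first contribution) and $j\ge -(N-1)$ (from the second and third contributions), with the negative-exponent portion of the first contribution surviving only for $j\ge -(N-1)$ because the terms with $j=-N$ cancel automatically (the summation range $\sum_{i=-l}^N$ becomes empty or contributes $1/(l+i)!$ with $l+i\ge 0$). After combining, the sum over $j$ in \eqref{58} splits into a nonnegative part and a singular part indexed by $-(N-1)\le j\le -1$, and exactly these three pieces appear in the corollary's stated identity as the coefficient of $t^j$ in the singular part.

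Then I would simply observe: matching coefficients of $t^j$ in the identity $(tF(t))^{(N)}=\text{(RHS of \eqref{58})}$ for $-(N-1)\le j\le -1$, the left-hand side contributes $0$ while the right-hand side contributes precisely the expression asserted to vanish in the corollary. This finishes the proof, and in fact Theorem~4.1 and this corollary together are just the two halves of the single coefficient comparison.

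The only thing to be careful about is bookkeeping on the summation bounds, in particular verifying that the $\max\{0,-l\}$ and $\max\{0,-l-1\}$ in the nonnegative part of \eqref{58} collapse to $-l$ and $-l-1$ in the singular part (since $l\le j\le -1$ forces $-l\ge 1>0$), and that the factorials $(l+i)!$ and $(l+i+1)!$ remain well-defined (which is guaranteed by the lower limits on $i$). Once these trivial range checks are made, no further computation is needed; the content of the corollary is the statement that the principal (singular) part of the Laurent expansion on the right of \eqref{58} is zero, which is forced by the regularity of $(tF(t))^{(N)}$ at $t=0$.
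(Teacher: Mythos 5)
Your proposal is correct and is essentially the paper's own argument: the corollary is obtained exactly by comparing the regular power series \eqref{52} for $(tF(t))^{(N)}$ with the expansion \eqref{58}, so the coefficients of the negative powers $t^j$, $-(N-1)\le j\le -1$, must vanish, which is the stated identity. The range bookkeeping you flag (the $\max\{0,-l\}$ collapsing to $-l$ for $l\le -1$, and the cancellation of the $j=-N$ terms between the first two contributions) is the only point needing care, and you have it right.
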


\end{document}